\documentclass[11pt, a4paper]{article}

\usepackage[utf8]{inputenc}
\usepackage{amsmath}
\usepackage{amssymb}
\usepackage{amsthm}
\usepackage{enumitem}
\usepackage{hyperref}
\usepackage{geometry}
\newtheorem{theorem}{Theorem}
\newtheorem{lemma}{Lemma}

\title{\textbf{An Optimal 14-Symbol Hybrid Basis for \\ BCH-Algebras}}

\author{
    Mahesh Ramani \\
    \small Independent
    \and
    Shlok Kumar \\
    \small Independent
}
\date{}

\begin{document}

\maketitle

\begin{abstract}
\noindent We present an optimally minimal two-axiom basis for BCH-algebras. The standard presentation of a BCH-algebra relies on three axioms: two equations and one quasi-identity. Using automated theorem proving, we prove that the two standard equations can be entirely replaced by a 14-symbol equation, $((xy)z)((x(z0))y) = 0$, while retaining the standard quasi-identity. We then provide a rigorous proof of strict minimality for this new equational companion. By employing an exhaustive, machine-assisted search space generation coupled with finite countermodel building, we demonstrate that no equation of 12 or fewer symbols can define the class of BCH-algebras when paired with the standard quasi-identity. Our literature searches have revealed no prior proof of this result, to the extent of our knowledge. All equivalence derivations were verified using Prover9, and all minimality countermodels were generated using Mace4.

\vspace{0.5em}
\noindent \textbf{Keywords:} BCH-algebras, automated theorem proving, axiom minimization, equational logic, Prover9, Mace4.
\end{abstract}

\section{Introduction}
The study of BCH-algebras sits inside a broader program in algebra and logic: given a class of structures, how far can its axioms be compressed without changing the theory? Results of this kind are especially common in automated reasoning, where researchers search for short axioms, improved bases, and minimal presentations using theorem provers and finite model builders. McCune’s work on short single axioms for Boolean algebra [1] is a classic example of this style of research, and it helped establish the general methodology of combining proof search with countermodel search to optimize axiom systems.

BCH-algebras are a natural setting for this kind of question because they are a generalization of BCK- and BCI-algebras, studied extensively as part of the algebraic semantics of logic. Hu and Li introduced BCH-algebras in 1983 [2], and later work emphasized that BCI-algebras form a proper subclass of BCH-algebras. This makes BCH-algebras an ideal test case for axiom minimization: the theory is nontrivial, structurally rich, and close enough to classical implicational algebraic logic that one can compare it to earlier short-basis results in neighboring fields.

It is a known result in algebraic logic that BCK-algebras (and consequently, the broader class of BCH-algebras) are not varieties; they are not axiomatizable by equations alone [6]. One cannot hope to remove the quasi-identity entirely. The natural optimization problem is therefore to reduce the equational part of the basis as much as possible while keeping the unavoidable quasi-identity. In that sense, the result here establishes an ``optimal hybrid basis'' rather than a pure single-axiom result.

\section{Background and Notation}
A BCH-algebra is an algebra $(X, \ast, 0)$ of type $(2, 0)$. We write $xy$ for $x \ast y$ and adopt left-associativity, so $xyz$ means $(xy)z$.

The standard three-axiom presentation requires, for all $x, y, z$ in $X$:
\begin{itemize}
    \item[\textbf{(B1)}] $xx = 0$
    \item[\textbf{(B2)}] $xy = 0$ and $yx = 0$ implies $x = y$
    \item[\textbf{(B3)}] $(xy)z = (xz)y$
\end{itemize}

We propose the following two-axiom hybrid basis:
\begin{itemize}
    \item[\textbf{(G)}] $((xy)z)((x(z0))y) = 0$
    \item[\textbf{(A3)}] $xy = 0$ and $yx = 0$ implies $x = y$
\end{itemize}

Note that (A3) is identical to the standard quasi-identity (B2). The core content of this proposal is that the single equation (G) successfully replaces both (B1) and (B3).

\section{Proof of Axiomatic Equivalence}

\begin{theorem}
The axiom systems $\{B1, B2, B3\}$ and $\{G, A3\}$ are axiomatically equivalent and define the same class of BCH-algebras.
\end{theorem}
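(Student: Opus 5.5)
The plan is to prove the two inclusions separately: first that every model of $\{B1,B2,B3\}$ satisfies (G), then that every model of $\{G,A3\}$ satisfies (B1) and (B3). Since (A3) is literally (B2), only the equational parts need work. In both directions the pivot is the identity $x0 = x$, which makes the subterm $z0$ in (G) collapse to $z$.

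For the forward direction I first derive $x0=x$ from (B1)--(B3); this is a standard BCH fact going back to Hu and Li. Put $a = x0$. Then $ax = (x0)x = (xx)0 = 00 = 0$ by (B3) and (B1). For the other comparison put $b = x(x0)$. Applying (B3) with $y:=x0$ and $z:=0$ gives $b0 = (x0)(x0) = 0$. I then aim to show $0b = 0$ by rewriting $0 = xx$ and pushing (B3) through the term, which yields $b=0$ by (B2). With $b = 0$ and $ax = 0$, a second use of (B2) gives $x0 = x$. Once this holds, (G) reads $((xy)z)((xz)y)=0$. By (B3) the second factor equals $(xy)z$, so the left side is $((xy)z)((xy)z)=0$ by (B1).

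For the converse I work only from (G) and (A3). The strategy has three stages:
\begin{enumerate}
\item Extract a few constant-level identities by specialising variables in (G). The substitutions $x:=0$, $y:=0$, $z:=0$, $y:=x$ and their combinations should produce identities such as $(u0)(u0)$-type terms equal to $0$. Feeding these back into (G) as instances should eventually give $xx=0$, that is, (B1).
\item Establish $x0 = x$. I would obtain $(x0)x=0$ and $x(x0)=0$ as instances of (G) after the preliminary identities are in place, and then invoke (A3) to conclude.
\item Derive (B3). With $z0=z$, (G) becomes $((xy)z)((xz)y)=0$. Swapping $y$ and $z$ gives $((xz)y)((xy)z)=0$, and (A3) yields $(xy)z=(xz)y$.
\end{enumerate}
If stage 1 does not give (B1) directly, I will instead obtain it afterwards from (B3) together with $x0=x$, via $xx = (x0)x = (xx)0$ and similar manipulations.

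I expect the main obstacle to be stage 2 of the converse: deriving $x0=x$, together with the preliminary $0$-identities, from (G) and (A3) alone. There (G) is the only equation available, and each use of (A3) requires establishing two inequalities in both orders. The derivation is likely to be a long chain of nested substitution instances, which is exactly where a Prover9 search is most useful. My plan is to let Prover9 find this lemma chain, then transcribe its key intermediate identities, each as an explicit instance of (G) or an application of (A3), into a human-readable proof. The remaining steps are the short calculations above.
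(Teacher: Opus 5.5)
\textbf{Forward direction.} This is correct and is essentially the paper's argument: Lemma 4, then (G) via (B3) and (B1). The step you only announce does go through: from $b0=0$ you get $0b=(b0)b=(bb)0=00=0$.

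\textbf{Converse direction: stage 1 cannot work as described.} You propose to obtain $xx=0$ only by specialising (G) and feeding instances back into (G). But $xx=0$ is not an equational consequence of (G). On $\{0,1,2\}$, put $2\cdot 2=1$ and make every other product $0$. Both factors on the left of (G) are themselves products, so they lie in $\{0,1\}$. Every product of elements of $\{0,1\}$ is $0$, so (G) holds, yet $2\cdot 2\neq 0$. This model violates (A3), since $0\cdot 1=1\cdot 0=0$. So (A3) must be used before (B1) can appear.

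\textbf{Stage 2 has the same problem.} Take instead $2\cdot 0=1$, $1\cdot 2=1$, and all other products $0$. Then (G) holds but $(2\cdot 0)\cdot 2=1$. So $(x0)x=0$ cannot come out ``as an instance of (G)'' unless (A3) has already been applied.

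\textbf{The fallback is circular.} The chain $xx=(x0)x=(xx)0$ just restates $u0=u$. Moreover, (B3), $x0=x$ and (A3) together do not force (B1): the left projection $xy=x$ satisfies all three. The fallback also presupposes stage 2, which in turn rests on stage 1.

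Deferring the raw search to Prover9 is not itself the problem; the paper also takes its Lemma 5 from the machine run. The problem is that the lemma chain cannot exist in the order and form you describe.

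\textbf{The missing idea.} (A3) has to enter first, through the involution $(x0)0=x$. The paper obtains $x((x0)0)=0$ and $((x0)0)x=0$ (Lemma 5). It applies (A3) to get $(x0)0=x$ (Lemma 6). Then (B1) is just Lemma 5(ii) rewritten.

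\textbf{After that, your stage 2 works with a single instance.} Take (G) with $x\mapsto x0$, $y\mapsto x$, $z\mapsto 0$, which reads $(((x0)x)0)(((x0)(00))x)=0$. Using $00=0$, $(x0)0=x$ and $xx=0$, the second factor becomes $0$, so this is $(((x0)x)0)0=0$, that is, $(x0)x=0$. Substituting $x\mapsto x0$ and using the involution gives $x(x0)=0$. (A3) then yields $x0=x$, and your stage 3 coincides with the paper's derivation of (B3).
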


The proof of this theorem requires two directions. Section 3.1 derives (G) from $\{B1, B2, B3\}$. Section 3.2 derives (B1) and (B3) from $\{G, A3\}$. Since (A3) is identical to (B2), this suffices to prove equivalence.

\subsection{Direction 1: Standard Basis implies Axiom (G)}
We work in a standard BCH-algebra satisfying axioms B1, B2, and B3. We begin by establishing a sequence of equational identities.

\begin{lemma}
$(xy)x = 0y$ for all $x, y$.
\end{lemma}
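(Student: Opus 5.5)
The identity follows in one step from the exchange law (B3) together with (B1). I would start from the left-hand side $(xy)x$ and apply (B3) with $z := x$. This is the instance $(xy)z = (xz)y$, so it gives
\[
(xy)x = (xx)y .
\]
Next I would rewrite the inner term $xx$ as $0$ using (B1). Since the operation is a function, equal arguments give equal values, so $(xx)y = 0y$. Chaining the two equalities yields $(xy)x = 0y$ for all $x, y$, which is the claim.

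The quasi-identity (B2) is not needed here; the lemma is a consequence of the equational part $\{B1, B3\}$ alone. The only step needing any care is choosing the right instance of (B3): substituting $z = x$ moves the second copy of $x$ next to the first so that (B1) can be applied. Beyond that there is no real obstacle. I would record the lemma as a basic rewriting rule for the later derivation of (G), where terms of the shape $(xy)x$ will presumably have to be collapsed to $0y$.
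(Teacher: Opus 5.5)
Your proof is correct and is the same as the paper's: instantiate (B3) with $z = x$ to get $(xy)x = (xx)y$, then replace $xx$ by $0$ using (B1). Your observation that the quasi-identity (B2) is not needed for this step is also accurate.
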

\begin{proof}
By B3, $(xy)x = (xx)y$. By B1, $xx = 0$. Therefore, substituting $0$ for $xx$ gives $(xy)x = 0y$.
\end{proof}

\begin{lemma}
$((xy)z)(xz) = 0y$ for all $x, y, z$.
\end{lemma}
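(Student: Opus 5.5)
Since this lemma is the natural three-variable companion of the previous one, I would prove it by a single application of (B3) followed by an appeal to the preceding lemma. The point is to move the $z$ out of the way so that the expression has exactly the shape $(uy)u$ covered by the earlier result.

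First, apply (B3) to the outer expression $((xy)z)(xz)$. Take the term $xy$ in the role of $x$, $z$ in the role of $y$, and $xz$ in the role of $z$. This gives
\[
((xy)z)(xz) = ((xy)(xz))z .
\]
This rearrangement does not immediately help. A better choice is to rewrite the inner factor $(xy)z$ itself using (B3), so that $(xy)z = (xz)y$. Substituting this into the left-hand side yields
\[
((xy)z)(xz) = ((xz)y)(xz).
\]

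Second, apply the preceding lemma, $(uy)u = 0y$, with $u := xz$. This gives $((xz)y)(xz) = 0y$. Combining the two equalities proves the claim.

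The only step that needs any care is choosing where to apply (B3). It must be applied to the inner subterm $(xy)z$, not to the outer product, so that the factor $xz$ appears on both sides and the preceding lemma applies verbatim. Once this is seen, the proof is two lines. It uses only (B3), together with (B1) through the preceding lemma, and does not need the quasi-identity (B2).
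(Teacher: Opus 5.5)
Your proof is correct and is essentially the paper's argument. The paper instantiates Lemma 1 with $a = xz$, $b = y$ to get $((xz)y)(xz) = 0y$ and then uses (B3) to replace $(xz)y$ by $(xy)z$, which is your two steps in the opposite order (the discarded outer application of (B3) is harmless and can simply be omitted).
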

\begin{proof}
We start with the identity from Lemma 1: $(ab)a = 0b$. \\
Let $a = xz$ and $b = y$. Substituting these into Lemma 1 yields:
\[ ((xz)y)(xz) = 0y. \]
By B3, we know that $(xz)y = (xy)z$. Because these terms are equal, we can substitute $(xy)z$ for the first instance of $((xz)y)$ in our equation. This yields:
\[ ((xy)z)(xz) = 0y. \]
\end{proof}

\bigskip

\begin{lemma}
$0(x(xy)) = 0y$ for all $x, y$.
\end{lemma}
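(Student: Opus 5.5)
The plan is to reduce the statement to the identity $x(x(xy)) = xy$ and then finish with Lemma 1. Put $u = x(xy)$. Lemma 1, applied with $y := u$, gives $0u = (xu)x = (x(x(xy)))x$. If we know $x(x(xy)) = xy$, the right-hand side becomes $(xy)x$, which equals $0y$ by Lemma 1 again. So $0(x(xy)) = 0y$ follows once that identity is established.

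To prove $x(x(xy)) = xy$ I would use the quasi-identity B2 and check the two inequalities separately. Write $w = xy$. The first is easy. By B3, $(x(xw))w = (xw)(xw)$, and this is $0$ by B1. The same computation with $y$ in place of $w$ shows $(x(xy))y = (xy)(xy) = 0$, which I will reuse below.

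The second inequality, $w(x(xw)) = 0$, i.e.\ $(xy)(x(x(xy))) = 0$, is the main obstacle. B3 alone only permutes arguments after the first, so it does not directly swap the two sides. What I would try first is to instantiate Lemma 2, $((ab)c)(ac) = 0b$, with $a = x$ and suitable $b, c$ built from $y$ and $xy$, so that $0$ appears in a useful position. I would then combine the resulting $0$-terms with Lemma 1 to rewrite $(xy)(x(x(xy)))$ as an expression of the form $(tt)s$. By B3 and B1, such an expression collapses to $0s$.

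That leaves the problem of getting rid of a leading $0s$. For this I expect to need the auxiliary fact that $ab = 0$ implies $0a = 0b$. I would prove it from Lemma 2 with $x = 0$, namely $((0b)z)(0z) = 0b$, by choosing $z$ appropriately and applying B2. Applied to $(x(xy))y = 0$, this auxiliary fact would already give one inequality between $0u$ and $0y$. The remaining inequality would then come from Lemma 1 and B3 in the same manner.

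If the direct route via $x(x(xy)) = xy$ proves too tangled, the fallback is to work purely with the fact that $ab = 0$ implies $0a = 0b$. In that version I would show $0u$ and $0y$ are mutually $\le$ and conclude with B2.
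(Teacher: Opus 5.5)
\textbf{There is a genuine gap: your direct route rests on an identity that is false in BCH-algebras, and the fallback, which does contain the right idea, is never carried out.}

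The reduction to $x(x(xy))=xy$ cannot be completed. That identity is a BCI-algebra fact: its proof uses antitonicity of $t\mapsto xt$, which comes from the BCI axiom. It fails for BCH-algebras. Take $\{0,1,2,3,4\}$ with
\[
\begin{array}{c|ccccc}
\ast & 0 & 1 & 2 & 3 & 4\\ \hline
0 & 0 & 0 & 0 & 0 & 0\\
1 & 1 & 0 & 0 & 1 & 0\\
2 & 2 & 2 & 0 & 0 & 0\\
3 & 3 & 0 & 3 & 0 & 0\\
4 & 4 & 2 & 3 & 1 & 0
\end{array}
\]
This is a BCH-algebra:
\begin{itemize}
\item B1 is the zero diagonal.
\item B2 holds because $t\ast0=t$, and the other off-diagonal zeros $1\ast2$, $1\ast4$, $2\ast3$, $2\ast4$, $3\ast1$, $3\ast4$ all have nonzero reverses.
\item B3 holds trivially when $b=c$ or $0\in\{b,c\}$, since $t\ast0=t$. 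For distinct nonzero $b,c$ one checks that $(a\ast b)\ast c=0$ for every $a$.
\end{itemize}
With $x=4$, $y=1$ you get $xy=2$, $x(xy)=3$, and $x(x(xy))=1\neq 2$. The inequality you call the main obstacle is also false there: $(xy)(x(x(xy)))=2\ast1=2\neq0$. So no combination of Lemma 1, Lemma 2 and B1--B3 can produce it.

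The fallback has the right ingredient, but you neither prove it nor see what it gives. The implication $ab=0\Rightarrow 0a=0b$ needs no detour through Lemma 2 at $x=0$ and B2. Lemma 1 says $(ab)a=0b$, and when $ab=0$ the left side is $0a$. This gives the equality $0a=0b$ outright, not ``one inequality'' between $0u$ and $0y$. Your plan of showing mutual $\le$ and invoking B2 is therefore unnecessary.

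Apply the implication with $a=x(xy)$ and $b=y$, using your own computation $(x(xy))y=(xy)(xy)=0$. Lemma 3 then follows in one line. This is the paper's proof unfolded: the paper instantiates Lemma 2 with $a=x$, $b=y$, $c=xy$ to get $((xy)(xy))(x(xy))=0y$, and removes $(xy)(xy)$ by B1.
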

\begin{proof}
Take the identity from Lemma 2: $((ab)c)(ac) = 0b$. \\
Set $a = x$, $b = y$, and $c = xy$. This gives:
\[ ((xy)(xy))(x(xy)) = 0y. \]
By B1, we know that any term multiplied by itself is $0$, so $(xy)(xy) = 0$. Substituting $0$ into the left side of the equation yields $0(x(xy)) = 0y$.
\end{proof}

\begin{lemma}
$x0 = x$ for all $x$.
\end{lemma}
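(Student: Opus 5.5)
The plan is to derive $x0 = x$ from the quasi-identity B2. It suffices to show both $(x0)x = 0$ and $x(x0) = 0$; B2 applied to the pair $x0$ and $x$ then gives $x0 = x$.

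The first identity is immediate. By B3, $(x0)x = (xx)0$, and by B1, $xx = 0$. Hence $(x0)x = 00$, which is $0$ by B1 again.

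The second identity, $x(x0) = 0$, is the real obstacle, because no single axiom rewrites it directly. I will use B2 a second time, now on the pair $t := x(x0)$ and $0$, so I need to show $t0 = 0$ and $0t = 0$.

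For $0t$, I specialise Lemma 3 to $y = 0$. This gives $0(x(x0)) = 00 = 0$, that is, $0t = 0$.

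For $t0$, I apply B3 to swap the two right factors: $(x(x0))0 = (x0)(x0)$. This equals $0$ by B1.

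Since $t0 = 0$ and $0t = 0$, B2 yields $t = 0$, that is, $x(x0) = 0$. Combined with $(x0)x = 0$ from above, a final application of B2 gives $x0 = x$, as required.
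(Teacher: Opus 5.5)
Your proof is correct and matches the paper's argument. The paper likewise gets $(x0)x=0$ from B3 and B1, then applies B2 to the pair $x(xy)$, $y$ (using $(x(xy))y=(xy)(xy)=0$ and Lemma 3) and sets $y=0$, which yields exactly your pair $x(x0)$, $0$; the only difference is that you specialise to $y=0$ from the start instead of first stating the general implication $y(x(xy))=0 \Rightarrow x(xy)=y$.
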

\begin{proof}
To prove $x0 = x$, we must satisfy the two conditions of the quasi-identity B2. We must show both $(x0)x = 0$ and $x(x0) = 0$.

\vspace{0.5em}
\noindent 
By B3, $(x0)x = (xx)0$. By B1, $xx = 0$, so $(x0)x = 00$. Applying B1 a second time yields $00 = 0$. Thus, $(x0)x = 0$.

\vspace{0.5em}
\noindent 
We construct an implication using B2. Let $A = x(xy)$ and $B = y$. By B2, if $AB = 0$ and $BA = 0$, then $A = B$.

\vspace{0.5em}
\noindent 
Let us first evaluate $AB$, which is $(x(xy))y$. By B3, $(x(xy))y = (xy)(xy)$. By B1, $(xy)(xy) = 0$. Since $AB = 0$ is universally true, B2 leaves us with the strict implication: $BA = 0$ implies $A = B$. Substituting our $A$ and $B$ back into this implication gives:
\[ y(x(xy)) = 0 \text{ implies } x(xy) = y. \]
Now, substitute $y = 0$ into this implication:
\[ 0(x(x0)) = 0 \text{ implies } x(x0) = 0. \]
To prove the left side of the implication is true, we use Lemma 3, which states $0(x(xy)) = 0y$. Setting $y = 0$ gives $0(x(x0)) = 00$. By B1, $00 = 0$. Because the left side of the implication evaluates to $0$, the right side must be true. Therefore, $x(x0) = 0$.

\vspace{0.5em}
\noindent Snce both $(x0)x = 0$ and $x(x0) = 0$, axiom B2 guarantees that $x0 = x$.
\end{proof}

\begin{proof}[Proof of Axiom (G)]
First, consider the term $((xy)z)((xz)y)$. By B3, we know $(xy)z = (xz)y$. Let $u = (xy)z$. Therefore, $((xy)z)((xz)y)$ is equivalent to $uu$. By B1, $uu = 0$. Therefore:
\[ ((xy)z)((xz)y) = 0. \]
By Lemma 4, we know $z0 = z$. This allows us to rewrite the subterm $xz$ as $x(z0)$. Substituting $x(z0)$ into our equation yields:
\[ ((xy)z)((x(z0))y) = 0. \]
This is exactly Axiom (G).
\end{proof}

\subsection{Direction 2: Proposed Basis implies (B1) and (B3)}
We now work in an algebra satisfying only the proposed basis $\{G, A3\}$. We will reconstruct the standard basis by extracting the core milestone lemmas from the automated equational search space.

\begin{lemma}
In an algebra satisfying $\{G, A3\}$, the following identities hold: (i) $x((x0)0) = 0$, (ii) $((x0)0)x = 0$. 
\end{lemma}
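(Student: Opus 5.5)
The plan is to work purely by substitution instances of (G), occasionally closing a pair of ``mutual zero'' identities with (A3) to obtain a genuine equation that can then be fed back into (G). Neither (i) nor (ii) can be an instance of (G) directly. In (G) the left factor is always of the shape $(pq)r$ and the right factor of the shape $(p(r0))q$, whereas (i) has a bare variable as left factor and (ii) a bare variable as right factor. So the first task is to manufacture equations that let us rewrite a variable, or a short term, into something of the required shape.

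\textbf{Step 1: constants.} Set $x=y=z=0$ in (G), which gives $((00)0)((0(00))0)=0$. Other constant instances give further relations among $0$, $00$, $(00)0$ and $0(00)$. I would first try to reach $00=0$ by exhibiting both $(00)0=0$ and $0(00)=0$ and applying (A3). This is essentially a finite rewriting search, and the instances of (G) with one or two variables replaced by $0$ should supply what is needed.

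\textbf{Step 2: instances with $z=0$ and $y=0$.} Putting $z=0$ in (G) and using $00=0$ gives $((xy)0)((x0)y)=0$. Specialising further to $y=0$ gives $((x0)0)((x0)0)=0$, a restricted form of (B1). Putting instead $y=0$ in (G) gives $((x0)z)((x(z0))0)=0$, which ties $z$ to $z0$ inside a product. The idea is to substitute terms of the form $w0$ for $x$ and then use these instances to show $u$ and $(u0)0$ are each ``below'' the other. Concretely:
\begin{itemize}
\item to get (ii), look for an instance whose left factor, after the Step 1 simplifications, is $(x0)0$ and whose right factor collapses to $x$;
\item to get (i), instantiate $x$ in (G) by a term $(pq)r$ which (A3) has already shown equal to $x$, for instance $(x0)0$ once (ii) and its converse are available.
\end{itemize}
This suggests proving (ii) first and then obtaining (i) by a rewrite.

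\textbf{Main obstacle.} The hardest step is breaking the shape barrier: (i) has a bare $x$ on the left, so at some point we need an equation $x=t$ with $t$ compound, obtained from (A3). That requires two one-sided identities $xt=0$ and $tx=0$, and one of them already has a bare left factor, so the argument looks circular. To break this I would apply (A3) first to a compound pair. The candidate is $x0$ against $(x(00))0$ or $((x0)0)0$: show these are mutually zero via the $y=0$ and $z=0$ instances, conclude an equation such as $x0=((x0)0)0$, and then use that equation to rewrite a suitable instance of (G) into (i) and (ii). If hand search stalls, I would follow the Prover9 derivation the paper relies on and extract its milestone equations in this order: first $00=0$, then the compound equation, then (ii), then (i).
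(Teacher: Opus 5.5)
Your proposal is a search plan rather than a proof, and the concrete moves it commits to do not work as described.

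First, the two premises $st=0$ and $ts=0$ of (A3) can never both be substitution instances of (G). From $s=(xy)z$, $t=(x(z0))y$ and $t=(x'y')z'$, $s=(x'(z'0))y'$, matching $t$ forces $x'=x$, $y'=z0$, $z'=y$. Matching $s$ then gives $xy=x(y0)$, i.e.\ $y=y0$, which no finite term satisfies. So your compound pair cannot be shown mutually zero ``via the $y=0$ and $z=0$ instances''. Indeed, $x0$ (with $x$ a variable) is never the left factor of an instance. The only instances with left factor $(x(00))0$ or $((x0)0)0$ have right factors $(x(00))(00)$ and $((x0)(00))0$, and after $00=0$ these only give self-products $uu=0$.

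The same observation blocks your route to (ii). Suppose the left factor $(pq)r$ of an instance becomes $(x0)0$ after ground simplifications such as $00=0$. Then $p=x$ and $q,r$ are ground, so the right factor $(x(r0))q$ also becomes $(x0)0$. You obtain $((x0)0)((x0)0)=0$, never $((x0)0)x=0$.

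Your route to (i), instantiating with $(x0)0$ ``once (ii) and its converse are available'', is circular, because the converse of (ii) is (i). Finally, the premises $(00)0=0$ and $0(00)=0$ of Step 1 are not instances of (G) either, since their left factors are not of the form $(pq)r$. You never derive them.

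What is missing is genuine paramodulation. You need to plug a copy of the left side of (G) into variable positions so that a proper subterm collapses to $0$; this yields zero-equations that are \emph{not} instances of (G). For example, $x:=(pq)r$, $y:=(p(r0))q$ turns (G) into $(0z)\,((((pq)r)(z0))((p(r0))q))=0$. Only equations of this kind can produce the first mutually-zero pair for (A3), and you exhibit no such chain.

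Your fallback of extracting the milestones from the Prover9 run coincides with what the paper does. Its ``proof'' of this lemma is only a pointer to the automated search, with no human-checkable derivation. So your proposal adds no verified step beyond that citation, and the hand argument you outline would not reach (i) or (ii) as written.
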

\begin{proof}
These emerge as milestone derivations in the automated search space when paramodulating G with the quasi-identity A3.
\end{proof}

\begin{lemma}
$(x0)0 = x$ for all $x$.
\end{lemma}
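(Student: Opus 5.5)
This lemma follows immediately from the previous lemma (Lemma 5) together with the quasi-identity (A3). The plan is to instantiate (A3) with the pair $a = (x0)0$ and $b = x$. Then (A3) reads: if $((x0)0)x = 0$ and $x((x0)0) = 0$, then $(x0)0 = x$.

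The two hypotheses are exactly the two identities supplied by Lemma 5. Part (ii) gives $((x0)0)x = 0$, which is the condition $ab = 0$. Part (i) gives $x((x0)0) = 0$, which is the condition $ba = 0$. Both premises of (A3) therefore hold for every $x$, and (A3) yields $(x0)0 = x$. No further equational reasoning from (G) is needed at this step.

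All the real difficulty lies upstream, in Lemma 5. Its two identities are obtained by specializing and paramodulating (G), for instance by choosing $y$ or $z$ to be $0$ or $x$, so that subterms of the form $(x0)0$ appear. If Lemma 5 were not available, I would first try substituting $y = z = 0$ into (G), which gives $((x0)0)((x(00))0) = 0$. I would then look for an independent derivation of $00 = 0$ to collapse this to the form of part (ii). That derivation would be the hard step.

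Given Lemma 5 as stated, however, the present lemma is a single application of (A3).
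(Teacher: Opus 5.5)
Your proof is correct and matches the paper's: you instantiate (A3) with $A=(x0)0$ and $B=x$, taking $AB=0$ from Lemma 5(ii) and $BA=0$ from Lemma 5(i). One correction to your speculative aside, which does not affect the proof: replacing $00$ by $0$ in $((x0)0)((x(00))0)=0$ gives $((x0)0)((x0)0)=0$, an instance of $uu=0$, not the identity of Lemma 5(ii).
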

\begin{proof}
We apply the quasi-identity A3 to the identities established in Lemma 5. Let $A = (x0)0$ and $B = x$. Lemma 5(ii) provides $AB = 0$, and Lemma 5(i) provides $BA = 0$. By A3, if $AB = 0$ and $BA = 0$, then $A = B$. This yields $(x0)0 = x$.
\end{proof}

\begin{proof}[Proof of B1 ($xx = 0$)]
Consider Lemma 5(ii): $((x0)0)x = 0$. By Lemma 6, we know that the subterm $((x0)0)$ is exactly equal to $x$. Substituting $x$ for $((x0)0)$ directly yields $xx = 0$.
\end{proof}

\noindent With $xx = 0$ established, we now utilize the full hybrid basis $\{G, A3\}$ to recover right-commutativity (B3).

\begin{lemma}
$x0 = x$ for all $x$.
\end{lemma}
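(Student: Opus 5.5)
The plan is to obtain $x0 = x$ from the quasi-identity A3. That means establishing the two equations $(x0)x = 0$ and $x(x0) = 0$ from (G), using B1 ($xx=0$, so in particular $00=0$) and Lemma 6 ($(u0)0 = u$) as rewriting tools.

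\textbf{Step 1: a reshaped form of (G).} Substitute $z = w0$ in (G). By Lemma 6, $z0 = (w0)0 = w$, so (G) becomes
\[ ((xy)(w0))((xw)y) = 0 . \]
Setting $w=0$ and using $00=0$ then gives $((xy)0)((x0)y) = 0$.

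\textbf{Step 2: $(x0)x = 0$.} In the identity just obtained, replace $x$ by $x0$ and put $y = x$. By Lemma 6, $((x0)0)x = xx$, and $xx = 0$ by B1. The identity therefore reads
\[ (((x0)x)0)\,0 = 0 . \]
The left side has the form $(u0)0$ with $u = (x0)x$, so Lemma 6 collapses it to $(x0)x$. Hence $(x0)x = 0$.

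\textbf{Step 3: $x(x0) = 0$.} Start from (G) with $y=0$, which gives $((x0)z)((x(z0))0) = 0$. Replace $x$ by $x0$. Lemma 6 turns the first factor into $xz$, so
\[ (xz)\bigl(((x0)(z0))0\bigr) = 0 . \]
Now set $z = x0$.
\begin{itemize}
\item By Lemma 6, $z0 = (x0)0 = x$, so the inner term $(x0)(z0)$ becomes $(x0)x$.
\item By Step 2, $(x0)x = 0$, so $((x0)x)0 = 00 = 0$.
\end{itemize}
This yields $(x(x0))0 = 0$. Right-multiplying both sides by $0$ gives $((x(x0))0)0 = 00 = 0$, and Lemma 6 reduces the left side to $x(x0)$. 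Hence $x(x0) = 0$.

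\textbf{Conclusion and main obstacle.} Apply A3 with $A = x0$ and $B = x$, using Steps 2 and 3, to conclude $x0 = x$. The main obstacle is choosing substitutions into (G) that produce nontrivial information. Many natural choices, such as $y=z=0$ or $z=x$, collapse to $uu=0$. The useful device is to substitute arguments of the form $t0$, so that Lemma 6 strips double zeros at exactly the right positions. Step 3 is the delicate case, because it needs Step 2 to be fed back in, and a final application of Lemma 6 to turn $(x(x0))0 = 0$ into $x(x0) = 0$.
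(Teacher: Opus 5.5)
Your argument is correct, and its skeleton matches the paper's: you apply A3 to the pair $x0$, $x$ after establishing $(x0)x=0$ and $x(x0)=0$. The difference lies in how you obtain these two identities.

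The paper never derives $(x0)x=0$ by hand. It imports it from the Prover9 run (Clause 2284). The substitution computation it then describes, $((x0)0)(x0)=0$ followed by Lemma 6, actually yields $x(x0)=0$. In other words, it converts one half into the other via $x\mapsto x0$.

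Your Step 2 supplies exactly the human-readable derivation of $(x0)x=0$ that the paper omits, so your proof is self-contained where the paper's leans on machine output. Your Step 1 detour through $z=w0$, $w=0$ is simply the instance $z=0$ of (G) combined with $00=0$.

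Conversely, your Step 3 is correct but much longer than necessary, and it is not really the delicate case. Once Step 2 and Lemma 6 are available, substituting $x\mapsto x0$ into $(x0)x=0$ gives $((x0)0)(x0)=0$. Lemma 6 turns this into $x(x0)=0$ at once, which is effectively the paper's route for that half.
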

\begin{proof}
With $xx = 0$ established, we seek to apply the quasi-identity A3 to the pair $x$ and $x0$. This requires showing both $(x0)x = 0$ and $x(x0) = 0$.

 \vspace{0.5em}
\noindent The identity $(x0)x = 0$ is a milestone derivation from Axiom G (appearing as Clause 2284 in the automated search). It is obtained by substituting $x \to x0$ into the previous identity to get $((x0)0)(x0) = 0$, and then simplifying the term $(x0)0$ to $x$ using Lemma 6.

 \vspace{0.5em}
\noindent 
Applying A3 to this pair of identities ($AB = 0$ and $BA = 0$, where $A = x0$ and $B = x$) immediately yields $x0 = x$ (documented as Clause 2922 in Script 3).
\end{proof}

\begin{lemma}
$((xy)z)((xz)y) = 0$ for all $x, y, z$.
\end{lemma}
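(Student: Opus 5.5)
This identity follows in one step from Axiom (G) once Lemma 7 is available. I will rewrite (G) using Lemma 7 and read off the statement.

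Start from Axiom (G): for all $x, y, z$,
\[ ((xy)z)((x(z0))y) = 0. \]
By Lemma 7, $w0 = w$ for every element $w$ of the algebra. Apply this with $w = z$, so $z0 = z$. Since the operation is a function, equal arguments give equal values. Hence $x(z0) = xz$, then $(x(z0))y = (xz)y$, and finally
\[ ((xy)z)((x(z0))y) = ((xy)z)((xz)y). \]
Combining this with (G) gives $((xy)z)((xz)y) = 0$, which is the claim. This is simply the reverse of the last step in the proof of (G) in Direction 1.

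The main obstacle is not in this lemma but upstream, in Lemma 7 and hence in Lemma 5. Lemma 7 needs both $(x0)x = 0$ and $x(x0) = 0$. The proof of Lemma 7 as written explicitly justifies only $(x0)x = 0$. I would supply $x(x0) = 0$ from Lemma 5(i) by substituting $x \mapsto x0$. This gives $(x0)(((x0)0)0) = 0$. Lemma 6 rewrites $((x0)0)0 = x0$, so this becomes $(x0)(x0) = 0$, an instance of (B1), not the needed $x(x0)=0$.

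So I would instead take $x(x0)=0$ from (G) directly. Setting $y = z = 0$ in (G) gives $((x0)0)((x(00))0) = 0$. Now (B1) gives $00 = 0$, Lemma 6 gives $(x0)0 = x$, and a second use of Lemma 6 gives $(x0)0 = x$ on the right-hand factor. The result is $x(x0) = 0$, as needed.

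Once Lemma 7 is secured in this way, the present lemma is the immediate substitution above. Together with A3, it prepares the derivation of (B3): apply A3 to the pair $(xy)z$ and $(xz)y$, using this lemma with $y$ and $z$ interchanged for the reverse inequality.
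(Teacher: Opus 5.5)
Your proof of the lemma itself is the paper's proof: apply Lemma 7 at $z$ to rewrite $x(z0)$ as $xz$ inside (G). Taking Lemma 7 as given, as the paper does, that step is correct.

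The repair of Lemma 7 that you make your argument depend on does not work, though. Setting $y=z=0$ in (G) gives $((x0)0)((x(00))0)=0$. After $00=0$, the right-hand factor is $(x0)0$, and Lemma 6 turns it into $x$, not $x0$. So the identity collapses to $xx=0$. That is just another instance of (B1), the same dead end as your first attempt, and it does not yield $x(x0)=0$.

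A substitution that does work is $z=0$, $y=x0$. Then (G) reads $((x(x0))0)((x(00))(x0))=0$. By $00=0$ and (B1) the right-hand factor is $(x0)(x0)=0$, so $((x(x0))0)0=0$, and Lemma 6 gives $x(x0)=0$. Substituting $x\mapsto x0$ into this and applying Lemma 6 once more gives $(x0)x=0$. A3 then yields $x0=x$ with no appeal to a cited search clause.

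Your diagnosis of the paper's proof of Lemma 7 is also inverted. The computation it displays, $((x0)0)(x0)=0$ simplified by Lemma 6, actually produces $x(x0)=0$ from $(x0)x=0$ via $x\mapsto x0$. The half the paper leaves to Clause 2284 is $(x0)x=0$. Either half yields the other by that substitution.
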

\begin{proof}
By Lemma 7, $z0 = z$. Substituting $z$ for $z0$ in Axiom G simplifies the axiom directly:
$((xy)z)((x(z0))y) = 0$ becomes $((xy)z)((xz)y) = 0$.
\end{proof}

\begin{proof}[Proof of B3 ($(xy)z = (xz)y$)]
Axiom G and Lemma 8 are universally quantified for all $x, y$, and $z$. Therefore, we can safely swap the variables $y$ and $z$ in Lemma 8 to obtain a second valid identity:
\[ ((xz)y)((xy)z) = 0. \]
We now apply the quasi-identity A3. Let $A = (xy)z$ and $B = (xz)y$. Lemma 8 provides $AB = 0$. Our variable-swapped identity provides $BA = 0$. By A3, if $AB = 0$ and $BA = 0$, then $A = B$. This results in $(xy)z = (xz)y$.
\end{proof}

\section{Proof of Strict Minimality}
To rigorously demonstrate that the proposed axiom (G) is optimally minimal, an exhaustive computational search [5] was conducted to prove that no shorter equation can replace (B1) and (B3).

In equational logic, the length of an axiom is traditionally measured by counting its total number of symbols, specifically the variables, constants, and binary operations (excluding the equality sign). Under this metric, a well-formed binary algebraic equation containing a total of $L$ leaves (variables and constants) will always contain exactly $L - 2$ binary operations in total across the entire equation, yielding a total symbol count of $2L - 2$. Consequently, any valid equation must possess an even number of symbols.

For example, axiom (B1), $xx = 0$, has 3 leaves ($x, x, 0$) and 1 operation (the implicit multiplication of $xx$), yielding $2(3) - 2 = 4$ total symbols. Axiom (G) contains 8 leaves and 6 operations, giving it a total length of 14 symbols. Because an equation of exactly 13 symbols is structurally impossible, proving that no equation of 12 or fewer symbols can complete the basis is sufficient to establish the strict minimality of (G).

The methodology relies on the systematic generation of candidate equations coupled with automated finite model building. First, a Python algorithm systematically constructs every possible binary tree topology representing algebraic terms up to a combined total of seven leaves (equivalent to a maximum of 12 symbols). These trees are populated using the BCH-algebra alphabet—the constant $0$ and up to three distinct variables—to generate an exhaustive list of candidate equations. Redundant candidates are mathematically pruned using symmetry breaking.

For every generated candidate equation, the automated theorem-proving environment (Mace4) [3] tests two mandatory criteria by attempting to construct finite mathematical countermodels:
\begin{enumerate}
    \item \textbf{Soundness:} The candidate equation must be a valid theorem of BCH-algebras. The automated model builder searches for a valid, finite BCH-algebra that violates the candidate equation. If such a model is found, the candidate is demonstrably false in the context of BCH-algebras and is discarded.
    \item \textbf{Completeness:} If the candidate is sound, it must be strong enough to replace the standard axioms. The model builder assumes the candidate equation and the quasi-identity (A3) are true, and then attempts to construct a mathematical structure that violates either standard axiom (B1) or (B3). If such a countermodel exists, it proves that the candidate equation is too weak to fully define the class of BCH-algebras.
\end{enumerate}

The conclusive proof of minimality rests entirely on the exhaustive elimination of the search space. The execution of the verification procedure evaluated every possible equation of 12 or fewer symbols and successfully generated a concrete, finite countermodel for every single candidate. Because the search concluded with zero unexpected passes and no timeouts, there are no unverified edge cases or false positives. Every equation strictly smaller than (G) is empirically shown to be either logically unsound for BCH-algebras or insufficiently powerful to derive the standard basis.

\section{Conclusion}
In this paper, we have presented an optimally minimal hybrid basis for BCH-algebras. By replacing the standard equations (B1) and (B3) with the single 14-symbol equation $((xy)z)((x(z0))y) = 0$, we achieved maximum equational compression without changing the underlying mathematical theory. Furthermore, through an exhaustive generation of the equational search space and automated finite countermodel building, we proved that this 14-symbol boundary is strictly minimal; no shorter equation can adequately capture the equational behavior of BCH-algebras when paired with the standard quasi-identity.

While the proofs of axiomatic equivalence have been organized into a vaguely human-readable logical sequence, the underlying heavy equational derivations represent a vast search space. This result highlights the continuing power of combining human-guided proof extraction with automated theorem proving (Prover9) and finite model building (Mace4) to map and optimize the foundational axioms of algebraic logic.

\section{Data and Code Availability}
To guarantee rigorous accuracy, both directions of the axiomatic equivalence, as well as the exhaustive minimality search, were independently generated and verified by computational tools. The data and scripts to reproduce these results are openly available:
\begin{itemize}
    \item The Prover9 (.in and .out) verification scripts (including Standard-Axioms-Imply-(G), New-System-Derives-(B1), and New-System-Derives-(B3)) are published on Zenodo at DOI: \href{https://doi.org/10.5281/zenodo.19339276}{10.5281/zenodo.19339276} [4].
    \item The Python codebase utilized to exhaustively generate candidate topologies and interface with Mace4 for the minimality proof is published on Zenodo at DOI: \href{https://doi.org/10.5281/zenodo.19339110}{10.5281/zenodo.19339110} [5].
\end{itemize}

\end{document}